        \renewcommand{\phi}{\varphi}
        \renewcommand{\le}{\leqslant}        
        \renewcommand{\ge}{\geqslant}  
        \renewcommand{\Re}{\mathop{\rm Re\,}\nolimits}
        \newcommand{\C}{\mathbb C}
        \newcommand{\R}{\mathbb R}
        \newcommand{\D}{\mathbb D}
        \newcommand{\dd}{\partial}
	\newcommand{\db}{\bar\partial}
        \newcommand{\E}{\mathcal E}
        \newcommand{\wtL}{\widetilde L}
        \newtheorem{lemma}{Lemma}
        \newcommand{\be}[1]{\begin{equation}\label{#1}}
        \newcommand{\ee}{\end{equation}}
        \newcommand{\myref}[1]{$(\ref{#1})$}
        \newenvironment{proof}[1][.]{\medskip\par\noindent
                {\bf Proof{#1}\ }}{\hfill$\Box$\par\medskip\noindent}
        \newcounter{example}
\newcommand{\Erdos}{Erd\H{o}s} 
\author{Alexander Fryntov, Fedor Nazarov}
\title{New estimates for the length of the \Erdos-Herzog-Piranian lemniscate}
\date{May 8,  2008}
\begin{document}
\maketitle

\begin{abstract}
\noindent

Let $p(z)$ be a monic polynomial of a fixed degree $n\ge 1$. Consider the
lemniscate
        $$
        L_p := \{z:\, |p(z)|=1\}\,.
        $$ 
Let $|L_p|$ be the length of $L_p$.
\Erdos, Herzog, and Piranian conjectured that
        \begin{equation}\label{eq:1}
        |L_p|\le |L_{p_0}|=2n+O(1)
        \end{equation}
where $p_0(z)=z^n-1$. Despite the efforts of many people, the conjecture
still remains unresolved. The goal of the note is to present a 
new approach to the problem that allows one to show that $|L_p|\le
|L_{p_0}|$ when $p$ is sufficiently close to $p_0$ and
to prove the asymptotic estimate $|L_p|\le 2n+o(n)$ as
$n\to\infty$ for all monic polynomials $p$.
\end{abstract}

\subsection{Introduction}
In 1958 \Erdos, Herzog, and Piranian (\cite{ErHePi}, Problem 12) asked whether the polynomial $p_0(z)=z^n-1$ has the 
maximal length of the lemniscate $L=L_p=\{z\in\C:\,|p(z)|=1\}$ among all 
monic polynomials $p(z)=z^n+\dots$ of degree $n$.
After 50 years, this conjecture still remains unresolved. The first 
upper bound $|L|\le 4\pi n$ was obtained by Dolzhenko in 1960 in his
thesis \cite{Do1} and published in 1963 \cite{Do2}. Meanwhile, in 1961, Pommerenke \cite{Po} 
published a much worse estimate $74n^2$, which became known much wider than Dolzhenko's
result. Apparently unaware of Dolzhenko's work, Borwein \cite{Bo} published the estimate
$|L|\le 8e\pi n$ in 1995. The first real improvement came in 1999 when Eremenko
and Hayman \cite{ErHa} proved the conjecture for $n=2$, showed that all critical points
of the extremal polynomial must lie on the lemniscate, and obtained the estimate
$|L|\le 9.173n$ for all $n$. In 2007, this upper bound was superceded by the
estimate $|L|\le 2\pi n$ proved by Danchenko \cite{Da}, which remained the best published
upper bound by the moment of writing this article. Several more papers devoted
to or motivated by the lemniscate problem have been published (see \cite{KuTk}, \cite{Ba}, 
and \cite{PeWa}, for instance).

The goal of this note is to present a new approach to the problem
based on an explicit formula for the length. Unfortunately, we haven't been
able to get a full solution either but, at least, we managed to show that
$|L_p|$ attains a local maximum when $p=p_0$ and to obtain an upper bound
of the form $2n+o(n)$.

\subsection{Acknowledgements}
This work was done when the first named author was visiting University of Wisconsin,
Madison. Our special thanks go to Andreas Seeger, whose generous support made
this visit possible. We are also grateful to Alexandre Eremenko, Mikhail Sodin,
and Alexander Volberg for valuable discussions. The second named author
was partially supported by the NSF grant DMS0501067.

\subsection{Notation}
Throughout this paper, we denote by $p$ a monic polynomial, by $n$ its degree, by 
$\eta$ an arbitrary root of $p$, by $\zeta$ an arbitrary root of $p'$, and by
$\xi$ an arbitrary root of $pp'$. This notation will be used without any further 
comments, so, say, $\sum_\zeta$ will always mean the sum over all roots of $p'$ 
counted with their multiplicities, etc.

As usual, $\C$ and $\R$ stand for the sets of complex and real numbers respectively.
We will also denote by $D_r$ the disk $\{z\in \C:\,|z|<r\}$ and by $T_r$ its boundary
circumference $\{z\in \C:\,|z|=r\}$.   
We denote by $d(F,z)$ the distance from the point $z\in \C$ to the set $F\subset\C$.

If $f$ is a complex valued smooth function defined on an open set
$\Omega\subset\C$\,, we shall treat it as a function of complex 
variable $z=x+iy$ and use the complex notation for
the partial derivatives
	$$
	\dd f = f_z = \frac{1}{2}(f_x - i f_y)\,,
	\qquad
	\db f = f_{\bar z} = \frac{1}{2}(f_x + i f_y)\,,
	$$
and for the differential forms
	$$
	dz = dx + i dy\,,\qquad d\bar z = dx - i dy\,.
	$$
In this notation,
	$$
	d\bar z\wedge d z = 2idx\wedge dy
	$$
and
	$$
	df = f_{z}dz + f_{\bar z} d\bar z = \dd f dz + \db f d\bar z\,,
	$$  
All other notation conventions will be introduced at the moment of their 
first appearance.

\subsection{The Stokes formula}
Let $E$ be a bounded open subset of the complex plane $\C$\,.
Suppose that the boundary of $E$ consists of finitely many smooth Jordan
arcs. Let $\omega$ be a differential 1-form on $\C$ with locally bounded coefficients
that are smooth outside a finite set $K$
whose differential $d\omega$ (which makes sense everywhere in
$\C\setminus K$) has locally integrable coefficients with respect to the area measure. 
(we shall call such forms quasismooth). It is not hard to check that
the classical Stokes formula  
	\be{eq:stoks}
	\int_{\dd E} \omega = \iint_E d\omega\,
	\ee
remains valid for quasismooth 
differential 1-forms. 

Let now $p=p(z)$ be a monic polynomial of a fixed degree $n\ge 2$ (the case $n=1$ is rather trivial, so we will not consider it here).
With any such polynomial, we associate the sets  
	$$
	L=L_p =\{z\in\C:\, |p(z)|=1\}\,,
	\quad
	E = E_p=\{z\in \C:\,|p(z)|<1\}\,.
	$$
Let $K$ be a finite set containing the critical points of $p$, and let  
$s=s(z)$ be a smooth complex valued function defined on $\C\setminus K$. 
Suppose that $s$ coinsides with  
the outward unit normal vector to $\dd E$ on $\dd E\setminus K$\,.
Then
	\be{eq:length}
	|L| 
	= 
	\int_{L} \frac{dz}{is}=\frac 1i\int_{L} \bar s\, dz\,.
	\ee
If the differential 1-form $\omega = \bar s\,dz$
is quasismooth, then the Stokes formula 
yields
	$$
	i|L| = \iint_{E} d\omega\,. 
	$$
Evaluating $d\omega$ we have
	$$
	d\omega = (\dd\bar s\, dz + \db\bar s\,d\bar z)\,\wedge dz = 
	\db \bar s\,d\bar z\wedge dz = 2i\,\overline{\dd s}\,
	dx\wedge dy\,.
	$$
Therefore,
	$$
	|L| = 2\iint_{E} \overline{\dd s}\,dx\wedge dy\,.
	$$ 
Since the quantity on the left hand side is real, we conclude that also
	\be{eq:main}
	|L| = 2\iint_{E}
	\dd s\,dA =
	2\Re \iint_{E} \dd s\,dA\,,
	\ee
where $A$ is the plane area measure\,. 

The outward normal vector to $\dd E$ is the normalized gradient of 
the function $\log|p|$. Computing it, we get
	$$
  s = \frac{\overline\phi}{|\phi|}=\frac{|\phi|}{\phi}\,,
 	\quad\text{where}\quad \phi = \frac{p'}{p}\,
	$$
on $L_p$. We have a lot of freedom extending $s$ from $L_p$ to the entire
complex plane. The most obvious extension is given by the right hand side
of the last formula, which makes sense everywhere except the zeroes
and the critical points of $p$. This way we get a form $\omega=\bar s\,dz$
such that $\int_{L_p}\omega=i|L_p|$ and for every other piecewise 
smooth curve $\gamma$, we have $\left|\int_\gamma\omega\right|\le |\gamma|$. 
One more direct computation shows that
	$$
        2\dd s =  
        -\frac{|\phi|}{\phi}\,\frac{\phi'}{\phi}=
        \frac{|\phi|}{\phi}\Bigl(\sum_\eta\frac 1{z-\eta}-\sum_\zeta\frac 1{z-\zeta}\Bigr)\, 
	$$	
where the first sum is taken over all roots $\eta$ of $p$ and the second
sum is taken over all roots $\zeta$ of $p'$ (counted with multiplicities in both cases).	
It is not hard to see from here that $\omega$ is quasismooth and for every bounded open
set $G$, one has
$$
\Bigl|\iint_G d\omega\Bigr|\le \iint_G \Bigl(\sum_{\xi}\frac 1{|z-\xi|}\Bigr)\,dA(z)  
$$
where the sum is taken over all roots $\xi$ of $pp'$. Our first task will be to use this 
extension to show that $|L_p|\le |L_{p_0}|$ for all $p$ sufficiently close to $p_0(z)=z^n-1$.

\subsection{A simple lemniscate problem}
\begin{lemma}\label{le:simple}
Let $p(z)=z^n+a_2z^{n-2}+a_3z^{n-3}+\dots+a_n$ satisfy $a_n\in\mathbb R$, 
$\max_{2\le k\le n}|a_k|=1$. Let $\wtL_p=\{z\in\C:\Re p(z)=0\}$.
Then for every $r\ge 2$, one has 
$$
|\wtL_p\cap D_r|\le 2nr-c_n
$$
where $c_n>0$ depends on $n$ only.
\end{lemma}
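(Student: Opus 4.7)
The plan is to represent $|\wtL_p \cap D_r|$ as an explicit integral over a real parameter and compare with the reference polynomial $p_0(z) = z^n$, using compactness of the admissible coefficient set to get a uniform defect. For the structural setup, with $|z| = r \ge 2$ and $\max_{2\le k\le n}|a_k| = 1$, the estimate $|p(z)| \ge r^n - \sum_{k=2}^n r^{n-k} \ge r^{n-1}(r-1) > 0$ shows $p$ does not vanish on $T_r$; moreover a direct computation of $\frac{d}{d\theta}\arg p(re^{i\theta}) = \Re(zp'/p)$ shows this derivative is strictly positive on $T_r$ (the error $|zp'/p - n|$ is dominated by $\sum k r^{n-k}/r^{n-1}(r-1)$, which stays below $n$ for $r \ge 2$), so $\arg p$ increases monotonically and $\wtL_p \cap T_r$ consists of exactly $2n$ points. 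Since $\Re p$ is harmonic on $\C$, the maximum principle forbids closed components of $\wtL_p$, so $\wtL_p \cap D_r$ is a disjoint union of $n$ simple arcs with endpoints on $T_r$.

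Parametrizing each arc by $y = \Im p$, which is monotonic since the tangent to $\wtL_p$ is orthogonal to $\nabla \Re p$ and hence aligned with $\nabla \Im p$, I would establish the identity
\[
|\wtL_p \cap D_r| = \int_{-\infty}^{\infty}\Phi(y)\,dy, \qquad \Phi(y) = \sum_{z\in p^{-1}(iy)\cap D_r}\frac{1}{|p'(z)|}.
\]
For $p_0(z) = z^n$ the preimages of $iy$ are the $n$-th roots of $iy$, all of modulus $|y|^{1/n}$, with $|p_0'(z)| = n|y|^{(n-1)/n}$, so $\Phi_0(y) = |y|^{-(n-1)/n}$ on $|y| < r^n$ and $\int\Phi_0\,dy = 2nr$, consistent with $\wtL_{p_0}\cap D_r$ being the union of $n$ diameters.

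The heart of the argument would then be the pointwise bound $\Phi(y) \le \Phi_0(y)$. Writing $|p'(z_j)| = \prod_{k\neq j}|z_j - z_k|$ for the preimages $z_1, \ldots, z_n$ of $iy$, this reduces to a symmetric-function inequality for $n$ points of fixed zero sum (from $a_1 = 0$) and fixed product $\prod z_k = (-1)^n(a_n - iy)$, becoming an equality exactly when the $z_k$ are equally spaced on a circle, i.e.\ when $p = p_0$. The case $n = 2$ is immediate: the two preimages yield $\Phi(y) = (y^2 + a_2^2)^{-1/4} \le |y|^{-1/2}$. Once this pointwise bound is in hand, compactness of the admissible set $\{(a_2,\ldots,a_n) \in \C^{n-2}\times\R : \max|a_k| = 1\}$, continuity of $\Phi$ in the coefficients, and strict inequality on a set of $y$ of positive measure together yield a uniform defect $c_n > 0$. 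The main obstacle is proving the symmetric-function inequality for $n \ge 3$ and making the defect uniform in $r \ge 2$; bounded $r \in [2, R_0]$ is handled by joint compactness in $(p, r)$, while $r \to \infty$ requires an asymptotic expansion $|\wtL_p \cap D_r| = 2nr + C(p) + o(1)$ with $C(p) < 0$ uniform in $p$ on the normalized set.
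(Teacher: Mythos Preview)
Your core step---the pointwise bound $\Phi(y)\le\Phi_0(y)$---is false, and with it the ``symmetric-function inequality'' you reduce to. Take $p(z)=z^3+z$ (so $a_2=1$, $a_3=0\in\R$, $\max_k|a_k|=1$). Its critical points are $\pm i/\sqrt3$, with critical values $\pm\,2i/(3\sqrt3)$, which lie on the imaginary axis. At $y_0=2/(3\sqrt3)$ two preimages of $iy_0$ coalesce at $i/\sqrt3\in D_2$, where $p'=0$; hence $\Phi(y)\to\infty$ as $y\to y_0$, while $\Phi_0(y_0)=|y_0|^{-2/3}$ is finite. In terms of your symmetric-function formulation: with $\sum z_j=0$ and $\prod z_j=iy_0$ fixed, the configuration $z_1=z_2=i/\sqrt3$, $z_3=-2i/\sqrt3$ makes $\sum_j\prod_{k\ne j}|z_j-z_k|^{-1}$ infinite, so equally spaced points do \emph{not} minimize the sum. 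The same example shows your structural claim that $\wtL_p\cap D_r$ is a union of $n$ disjoint simple arcs is wrong: when a critical value is purely imaginary the critical point lies on $\wtL_p$ and several branches cross there. Thus the approach as written cannot close; at best one could hope for an integrated inequality $\int\Phi\le\int\Phi_0$, but that is exactly the bound $|\wtL_p\cap D_r|\le 2nr$ with no defect, and extracting a uniform $c_n>0$ from it would require a different idea.

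The paper avoids this pathology entirely by using integral geometry instead of the coarea-type identity. It projects to a large sphere and applies the Poincar\'e (Crofton) formula to write the length as an average of intersection numbers with lines, obtaining
\[
|\wtL_p\cap D_r|\;\le\;2nr-\pi\int_{\C}\bigl[n-\#(\wtL_p\cap\Gamma_w)\bigr]\,\frac{dA(w)}{2|w|}\,,
\]
where $\Gamma_w$ is the line through $w$ perpendicular to $w$. The integrand is nonnegative since a real polynomial of degree $n$ has at most $n$ real zeros, and the paper shows the integral is strictly positive by locating, for each admissible $p$, a direction $\alpha$ for which $\Re p(te^{i\alpha})$ is forced to have a non-real root (using the least index $j$ with $a_j\ne0$); compactness of the coefficient set then gives the uniform $c_n$. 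This argument is insensitive to critical values landing on $i\R$, which is precisely where your approach breaks.
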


\begin{proof}
Note first of all that the intersection $\wtL_p\cap T_\rho$ 
consists of at least $2n$ points for 
every $\rho\ge 2$. Indeed, $\Re z^n$ has $n$ positive maxima interlaced 
with $n$ negative minima
of size $\rho^n$ on $T_\rho$. All other terms together can contribute not more than 
$\rho^{n-2}+\rho^{n-3}+\dots+1<\rho^n$, so the signs of $\Re p(z)$ at those maxima 
and minima are the same as those of $\Re z^n$. Thus, we have at least $2n$ sign 
changes and, thereby, at least $2n$ roots for $\Re p(z)$ on $T_r$.

Now consider a big sphere $S$ of radius $R$ touching the complex plane at the 
origin (we think that the complex plane is horizontal and the sphere is above
the plane). Let $\tau$ be the central projection from the plane to the lower
hemisphere of the sphere $S$ with the center of the projection coinciding
with that of the sphere. Then, by the well-known Poincar\'e formula,
$$
|\tau(\wtL_p)|=\pi R \int_S\#(\E_\theta\cap \tau(\wtL_p))\,d\mu(\theta)
$$  
where $\mu$ is the surface measure on $S$ normalized by the condition $\mu(S)=1$ and 
$\E_\theta=\{x\in S:\langle x,\theta\rangle=0\}$ is the great circle
of $S$ orthogonal to $\theta$. 
Since every horizontal circumference on the lower hemisphere lying outside
$\tau(D_r)$ intersects $\tau(\wtL_p)$ at not fewer than $2n$ points, we conclude
that
$$
|\tau(\wtL_p\setminus D_r)|\ge \pi Rn-2nR\arctan\frac rR.
$$
Combining this inequality with the previous identity, we see that 
$$
|\tau(\wtL_p\cap D_r)|\le 2nR\arctan\frac rR-
\pi R\int_S [n-\#(\E_\theta\cap \tau(\wtL_p))]\,d\mu(\theta)\,.
$$
Since $\tau^{-1}(\E_\theta)$ is a line on
the complex plane $\C$, we conclude that for almost all $\theta$, one has
$$
\#(\E_\theta\cap \tau(\wtL_p))\le n\,,
$$
so the integrand is non-negative almost everywhere. 

Now let us pass to the limit as $R\to\infty$. The left hand side tends 
to $|\wtL_p\cap D_r|$. The first term on the right hand side tends to $2nr$.
Finally, the integral can be rewritten as  
$$
\int_\C [n-\#(\wtL_p\cap \Gamma_w)]
\left(1+\frac{|w|^2}{R^2}\right)^{-3/2}\,\frac{dA(w)}{2|w|}
$$
where $\Gamma_w$ is the line passing through $w\in\C$ in the direction 
perpendicular to $w$ (this is a simple exercise in the change of varible
theorem) and, thereby, it tends to
$$
\int_{\C}[n-\#(\wtL_p\cap \Gamma_w)]\,\frac{dA(w)}{2|w|}
$$
by the monotone convergence theorem.

Thus it remains to show that this integral is separated away from $0$ 
on the class of polynomials under consideration. We shall show that it is true
even for the smaller integral $\int_{D_1}$. 

The first observation is that for a fixed monic polynomial $p$ the number of the
intersections of $\wtL_p$ with $\Gamma_w$ is a stable quantity (meaning that it
doesn't change under slight perturbations of the coefficients of $p$) for almost
all $w$. This, together with the dominated convergence theorem, allows to conclude
that our integral is a continuous function of the coefficients $a_2,\dots,a_n$ 
running over a compact set. Thus, it will suffice to show that this integral never 
vanishes.

To this end, fix $p$ and let $j$ be the least index for which $a_j\ne 0$. Consider
the line $\Gamma(\alpha)=\{te^{i\alpha}:\,t\in\R\}$. We have
$$
\Re p(te^{i\alpha})=t^n\Re e^{in\alpha}+t^{n-j}\Re[a_je^{i(n-j)\alpha}]+\dots=q(t)\,.
$$  
Note now that $\Re[a_je^{i(n-j)\alpha}]$ preserves sign on intervals 
of length $\frac{\pi}{n-j}$ and that $\Re e^{in\alpha}$ changes sign on every
interval whose length exceeds $\frac{\pi}n$. Thus, we can find $\alpha$ such 
that $\Re[a_je^{i(n-j)\alpha}]$ and $\Re e^{in\alpha}$ are both non-zero and 
have the same sign. If $q(t)$ had $n$ real roots, every its derivative would 
have only real roots. But the $n-j$-th derivative is a polynomial of the form
$\kappa t^j+\lambda$ where $\kappa,\lambda\ne 0$ have the same sign and such
polynomial has at least one non-real root for all $j\ge 2$. Thus $q(t)$ must have 
non-real roots as well. Since the existence of non-real roots is a stable
property of polynomials, we conclude that for all $w$ sufficiently close
to $0$ whose argument is sufficiently close to $\alpha+\frac{\pi}2$, the 
line $\Gamma_w$ intersects $\wtL_p$ at strictly less than $n$ points, which
is more than enough to ensure the strict positivity of the integral in question. 
\end{proof}

A simple rescaling argument yields the following result. 
Let $p(z)=z^n+a_2z^{n-2}+a_3z^{n-3}+\dots+a_n$ satisfy $a_n\in\mathbb R$, 
$\max_{2\le k\le n}|a_k|^{1/k}=a>0$. Let $\wtL_p=\{z\in\C:\Re p(z)=0\}$.
Then for every $r\ge 2a$, one has 
$$
|\wtL_p\cap D_r|\le 2nr-c_na
$$ 
For what follows, it will be useful to note that under these assumptions,
the roots $\zeta$ of $p'$ lie in $D_{2a}$.

\subsection{The first variation estimate}

Suppose that $p$ is a monic polynomial that is close to $p_0$. Using an
appropriate shift of the argument, we can ensure that the coefficient 
at $z^{n-1}$ equals $0$. After that is done, we can make the free term 
real by using an appropriate rotation. Hence we can assume without 
loss of generality that $p=p_0+q$ where $q(z)=\sum_{k=2}^n a_k z^{n-k}$, the 
coefficients $a_k$ are small, and $a_n\in\R$.

Let 
$$
a=\max_k |a_k|^{1/k}\,.
$$
Choose $r\in(4a,\frac 14)$ and consider the parts of the domains $E_p$ and $E_{p_0}$
lying inside and outside $T_r$ separately. We shall start with the comparison
of $|L_p\setminus D_r|$ and $|L_{p_0}\setminus D_r|$.

Using the same differential 1-form $\omega$ 
as before, we get
$$
i|L_p\setminus D_r|+\int_{E_p\cap T_r}\omega=\iint_{E_p\setminus D_r}d\omega
$$
and
$$
\int_{L_{p_0}\setminus D_r}+\int_{E_{p_0}\cap T_r}\omega=\iint_{E_{p_0}\setminus D_r}d\omega
$$ 
Let $G$ be the symmetric difference of $E_p$ and $E_{p_0}$. Since 
$$
\Bigl|\int_\gamma\omega\Bigr|\le|\gamma| 
$$
for every piecewise smooth curve $\gamma$, we immediately conclude from the
above formulae that 
$$
|L_{p}\setminus D_r|\le |L_{p_0}\setminus D_r|+|G\cap T_r|+
\iint_{G\setminus D_r}\sum_\xi\frac{1}{|z-\xi|}\,dA(z)\,.
$$
Now let us notice that for every $\rho\in (4a,\frac 14)$, the circumference
of radius $\rho^n$ centered at $1$ is transversal to the unit circumference.
Since $z^n$ travels over that circumference at constant speed, simple geometric
considerations show that $|\,|p_0(z)|-1|\ge c\rho^{n-1}\min_j|z-z_j|$ 
when $z\in T_\rho$ where $z_j$ are the 2n solutions of the system 
$|p(z_0)=1|$, $|z|=\rho$, and $c>0$ is some absolute constant. 
Since $|q|\le 2a^2\rho^{n-2}$ on $T_\rho$, we immediately conclude that
$G\cap T_\rho$ is contained in the union of $2n$ arcs of length $Ca^2\rho^{-1}$, i.e.,
that 
\be{eq:cross_sect}
|G\cap T_\rho|\le Ca^2\rho^{-1}
\ee  
with some absolute positive constant $C$ depending on $n$ only. 
In particular, 
$$
|G\cap T_r|\le Ca^2r^{-1}\,.
$$
To estimate the double integral over $G\setminus D_r$, note that
since $p_0'$ doesn't vanish outside $D_{1/4}$, and since
$|q|\le 2^{n+1} a^2$ in $D_2$, we can use the regular perturbation theory 
to conclude that $G_1=G\setminus D_{1/4}$ is contained in the $Ca^2$-neighborhood
of $L_{p_0}$, so $A(G_1)\le Ca^2$. Also, when $a$ is small enough, the distance 
from $G_1$ to every root $\xi$ of $pp'$ is bounded from below by some constant
$c$ depending on $n$ only. Thus, 
$$
\iint_{G_1}\sum_\xi\frac{1}{|z-\xi|}\,dA(z)\le Ca^2\,.
$$
The other part $G_2=G\cap D_{1/4}\setminus D_r$ is more interesting.
If $\eta$ is a root of $p$, then we still have $d(G_2,\eta)\ge c$, so
the integral of $\frac 1{|z-\eta|}$ over $G_2$ does not exceed $CA(G_2)$.
In view of (\ref{eq:cross_sect}), we have
$$
A(G_2)\le Ca^2\int_r^{1/4}\frac{d\rho}{\rho}\le Ca^2\log\frac 1r\,.
$$ 
If $\zeta$ is a root of $p'$, then $|\zeta|\le 2a$ and, thereby,
$$
\frac{1}{|z-\zeta|}\le \frac 2{|z|}
$$
on $G_2$. Thus, the integral of $\frac 1{|z-\zeta|}$ over $G_2$ is
bounded by 
$$
Ca^2\int_r^\infty \frac{d\rho}{\rho^2}=Ca^2r^{-1}\,. 
$$
Bringing all these estimates together, we finally conclude that
$$
|L_{p}\setminus D_r|\le |L_{p_0}\setminus D_r|+Ca^2r^{-1}\,.
$$
Now it is time to compare $|L_p\cap D_r|$ with 
$|L_{p_0}\cap D_r|$. For the latter, we'll use the
trivial lower bound $|L_{p_0}\cap D_r|\ge 2nr$.
To estimate the former, consider the polynomial 
$f(z)=1+p(z)=z^n+q(z)$, the lemniscate 
$$
\wtL=\{z:\Re f=0\}\,.
$$
and the region $F_p=\{z:\, \Re f(z)>0\}$.
Obviously, $E_p\subset F_p$. The key observation is that the difference
$H=F_p\setminus E_p$ is rather small in the disk $D_r$. Indeed, since 
the inequality $|p|<1$ is equivalent to $2\Re f-|f|^2>0$ and $|f|\le 2r^n$
in $D_r$, we conclude that $H\cap D_r\subset\{z:\,|\Re f(z)|\le 2r^{2n}\}$.
Arguing as above, we see that 
$$
|L_p\cap D_r|\le |\wtL\cap D_r|+|H\cap T_r|+\iint_H\Bigl(\sum_\xi
\frac 1{|z-\xi|}\Bigr)\,dA(z)
$$
The length $|H\cap T_r|$ can be easily estimated from above by 
$Cr^{n+1}$. Indeed, the condition
$|\Re f|\le 2r^{2n}$ implies $|\Re z^n|\le 2(r^{2n}+a^2r^{n-2})\le \frac{r^n}2$.
Hence, the speed with which $\Re f$ changes when $z$ travels over $T_r$ with 
unit speed across $H$ is at least 
$$
\frac{nr^{n-1}}2-\sum_{k=2}^n (n-k)a^kr^{n-k-1}\ge \frac{n r^{n-1}}4\,, 
$$
from where the above estimate follows immediately.

To estimate the area integral, note that $\iint_H \frac{dA(z)}{|z-\xi|}$ has the 
geometric meaning of the average (over the angle the lines make with
the positive axis) length of the cross-sections of $H$ by the lines
passing through $\xi$. These cross-sections can be easily controlled by
the Remez theorem (see \cite{BoEr}, Theorem 5.1.1) that asserts that 
the measure of the set where the 
polynomial $at^n+\dots$ with real coefficients is less than $\varepsilon$
in absolute value is at most $4(\varepsilon/|a|)^{1/n}$. Applying this
estimate to the polynomial $\Re f(\xi+e^{i\alpha}t)$, we conclude that
for every $\xi$,
$$
\iint_{H\cap D_r}\frac{dA(z)}{|z-\xi|}\le 4\cdot 2^{1/n}r^2\int_0^\pi
\frac{d\alpha}{|\cos n\alpha|^{1/n}}=Cr^2\,.
$$ 
Finally, according to Lemma \ref{le:simple}, 
$$
|\wtL\cap D_r|\le 2nr-ca\,.
$$
Bringing all the estimates together, we obtain
$$
|L_p|\le |L_{p_0}|-ca+C(r^2+a^2r^{-1})
$$
and it remains to put $r=a^{2/3}$ and choose $a$ sufficiently small to get the desired result.

\subsection{The simplest upper bound for the lemniscate length}

We shall start with using the above extension to show that 
	$$
	|L|\le 2\pi(2n-1)\,.
	$$
We have
  $$
  |L|= - \iint_E \frac{|\phi|}{\phi}\,\frac{\phi'}{\phi}\,dA\,
	\le 
	\iint_E \left|\frac{\phi'}{\phi}\right|\,dA\,.
  $$
Recall now that 
  $$
  -\frac{\phi'}{\phi}=\sum_\eta \frac 1{z-\eta}-\sum_\zeta \frac 1{z-\zeta}
  $$
Thus    
  $$
  |L| 
	\le
  \sum_\xi \iint_E \frac{1}{|z-\xi|}\,dA\,,
   $$ 
Since the logarithmic capacity of $E$ is equal to $1$, P\'olya's
theorem (see \cite{Ra}, Theorem 5.3.5) implies that its area does not exceed $\pi$.  
Thus,
        $$
        |L|
	      \le 
        (2n-1)\iint_{\D} \frac{dA}{|z|} 
	= 
	2\pi(2n-1)\,.
        $$
Note, by the way, that if $p$ has multiple roots, then every root of $p$ of
multiplicity $m$ appears $m$ times in $\sum_\eta$ and $m-1$ times in $\sum_\zeta$.
If we take this cancellation into account, then the last estimate can be
improved to $2\pi(2k-1)$ where $k$ is the number of distinct roots of $p$.

\subsection{An improved upper bound.}

To obtain a better estimate, we consider a different extension of $s$,
which differs from the one we considered before by the factor $|p|$
(this factor is identically $1$ on $L$). So, we define
	$$
	s = |p|\frac{|\phi|}{\phi}=\frac{p|p'|}{p'} = \frac{|p'|}{\phi}\,.
	$$
In this case 
	$$
	2\dd s = \frac{p|p'|}{p'}
	\Big(
        \frac{2p'}{p} - \frac{p''}{p'}
	\Big)
	=
	2|p'|-\frac{|p\phi|}{\phi}\psi
	=|p'|-\frac{|p\phi|}{\phi}\frac{\phi'}{\phi}
	\,,
	$$
where
	$$
	\psi := \frac{p''}{p'} = \sum_{\zeta} \frac{1}{z-\zeta}\,.
  $$
The lemniscate length can now be evaluated as follows
	\be{eq:main_two}
	|L| = 2\iint_{E}
	|p'|\, dA - 
	\iint_{E} \frac{|p\phi|}{\phi}\, \psi\, dA\,.
	\ee
Since $p$ maps $E$ onto the unit disk covering each
point of the unit disk $n$ times, we have
	$$
	\iint_{E} |p'|^2\, dA = \pi n\,.
	$$
The Cauchy inequality yields
	$$
	\iint_{E} |p'|\, dA \le \pi \sqrt{n}\,.
	$$
On the other hand,
	$$
	\Bigl|\iint_{E} \frac{|p\phi|}{\phi}\, \psi\, dA \Bigr| 
	 \le \iint_E
	|\psi|
	\,dA 
	\le 
	\sum_{\zeta} \iint_E \frac{1}{|z-\zeta|} 
	\le
	(n-1)\iint_\D\frac{1}{|z|}\, dA = 2\pi(n-1)\,.
	$$
Combining these estimates, we obtain the upper bound
	$$
	|L| \le 2\pi(n-1+\sqrt{n})\,,
	$$
which is only marginally worse than Danchenko's estimate $2\pi n$.

\subsection{Asymptotic estimate}
In this section we obtain an asymptotic 
estimate of the lemniscate length of a monic polynomial
$p$ of degree $n$ as $n$ approaches infinity.

Using the same extension as in the previous section, we get 
the inequality
        $$
        |L|\le \pi\sqrt n + J\,,
        $$
where
        $$
        J 
        =
        - \Re \iint_E
        \frac{|\phi p|}{\phi}\,
        \frac{\phi'}{\phi}\, dA\,.
        $$
In this section we will estimate the integral $J$ more accurately to
get the best possible asymptotic estimate for $|L|$\,.

Take $\delta\in(0,\frac14)$. Let $E_\delta\subset E$ be the set 
of all points $z\in E$ satisfying 
        \be{eq:e_delta_one}
        \Big|
        \frac{\phi'}{\phi}
        \Big| \ge
        2\delta n + \sum_\xi
        \frac{4\delta}{|z-\xi|}
        \ee
and
        \be{eq:e_delta_two}
        |z-\xi| \ge \frac{2\delta}{\sqrt{n}}\qquad\text{for all }\xi\,.         
        \ee
We shall start with estimating the integral over $E\setminus E_\delta$.
The set $E\setminus E_\delta$ is the union of the set
$B_1$ where condition \myref{eq:e_delta_one} is broken and the set
$B_2$ where condition \myref{eq:e_delta_two} is broken\,. Since
the area of the latter set is at most $4\pi\delta^2$\,,
we have 
\begin{align}
\iint_{B_1}\left|\frac{\phi'}{\phi}\right|\,dA &\le 2\delta n A(B_1)+\sum_\xi 4\delta\iint_{B_1}
\frac{dA(z)}{|z-\xi|}\le 2\pi n\delta+8\pi(2n-1)\delta
\\
\iint_{B_2}\left|\frac{\phi'}{\phi}\right|\,dA &\le \sum_\xi 4\delta\iint_{B_2}
\frac{dA(z)}{|z-\xi|}\le 4\pi(2n-1)\delta\,, 
\end{align}
so
  $$
	\iint_{E\setminus E_\delta}
        \Big|
        \frac{\phi'}{\phi}
        \Big|\, dA \le 2\pi n\delta + 8\pi(2n-1)\delta
        + 4\pi\delta (2n-1)
        \le 26\pi\delta n
        $$
and hence,
        $$
        J \le 26\pi\delta n+\iint_{E_\delta}|p'|\,dA+J_\delta
        $$
where   
        $$     
        J_\delta=
        -\Re \iint_{E_\delta}
        \frac{|\phi p|}{\phi}
        \,\psi\,dA\,.
        $$
Using the estimate 
	$$
	\iint_{E} |p'|\, dA \le \pi\sqrt{n}\,,
	$$
once more,
we see that it is enough to estimate $J_\delta$.
	
We rewrite $J_\delta$ 
using the explicit expression for  $\psi$ now:
	\be{eq:explicit}
        J_\delta = \sum_\zeta
        \Re \iint_{\delta} \frac{|p\phi|}{\phi}
        \,\frac{1}{\zeta - z}\,dA\,.
	\ee	
	
Now partition the complex plane into equal squares with side
length 
	$$
	\ell = \frac{\delta^2}{\sqrt{n}}\,.
	$$
and define the set $F$ as the minimal union of these squares that
contains $E_\delta$, that is, $F$ includes all the squares 
that intersect $E_\delta$\,. For each square $Q$ in $F$, let 
$\widetilde Q$ be the twice larger square with the same center.
Finally, let 
$$
\widetilde F=\bigcup_{Q:Q\subset F}\widetilde Q\,.
$$
Note that
	\be{eq:distance}
	d(\widetilde F, \xi) > 
	\frac{2\delta}{\sqrt n}-\frac{4\delta^2}{\sqrt n}>
	\frac{\delta}{\sqrt{n}}\,,
	\ee
for every root $\xi$ of $pp'$. 

Since for every function $\Phi$
        $$
        \iint_{E_\delta} 
	      \Re (|p|\,\Phi)\, dA \le
        \iint_{E_\delta} \Re_+\Phi\, dA \le
        \iint_{F} \Re_+ \Phi\, dA\,,
        $$
we obtain 
        \be{eq:estim_on_fi_delta}
        J_\delta \le 
        \sum_\zeta  
        \iint_{ F} 
        \Re_+\Big(\frac{|\phi|}{\phi}\,
        \frac{1}{\zeta - z}
        \Big)
        \,dA(z)\,.
        \ee        
Let now
	\be{eq:square_integral}
	I_\zeta(Q) : = 
	\iint_{Q} \Re_+ \frac{|\phi|}{(\zeta-z)\phi}\,
	\, dA(z)\,.
	\ee
where $Q$ is a square of the set $F$ and
$\zeta$ is a root of $p'$\,.
Let $w$ be any point in $Q$. Then
        $$
        I_\zeta(Q) 
        \le
        \iint_Q \Re_+ 
        \Big(
        \frac{|\phi|}{(\zeta - z)\phi}
        -
        \frac{|\phi|}{(\zeta - w)\phi}
        \Big)
        \,dA(z) 
        + 
        \frac{1}{|\zeta-w|}
        \iint_{Q}
        \Re_+\theta\, dA\,,
        $$
where
	$$
	\theta = \frac{|(\zeta-w)|}
	{(\zeta-w)}\,\frac{|\phi|}{\phi}\,.
	$$
Denote the first term by $I_\zeta^{(1)}(Q)$ and the
second one by $I_\zeta^{(2)}(Q)$\,.	

We have
        $$
        I_\zeta^{(1)}(Q) \le
        \iint_{Q} \frac{|w - z|}{|\zeta-z|\,|\zeta -w|}
        \, dA(z)\,
        \le
        \frac{2\ell}{|\zeta-w|}
        \iint_{Q}
        \frac{1}{|z-\zeta|}\, dA\,.
        $$

Since $\ell=\delta^2/\sqrt{n}$ and $|w-\zeta|\ge \delta/\sqrt{n}$
(see \myref{eq:distance}), we have
	\be{eq:i_one}
	I_\zeta^{(1)}(Q) \le
	2\delta \iint_{Q}\frac{1}{|\zeta-z|}\, dA(z)\,.
	\ee
To estimate $I_\zeta^{(2)}(Q)$, note that
	\be{eq:theta_series}
	\Re_+\theta = \frac{1}{\pi}
	+ \sum_{k\ne 0} \,a_k \,\theta^k\,.
	\ee
where $a_k$ are the Fourier coefficients of the function $\Re_+ z$
on the unit circumference.
To estimate the sum over $k\ne 0$, we need
\begin{lemma}\label{le:oscillation}
Let $Q$ be a square with size length $1$ and
let $u$ be a real harmonic function in the twice
larger square
$\widetilde Q$ with the same center 
such that
        $$
        |\db u| > R\,,
        $$
everywhere in $\widetilde Q$. Then
        $$
        \Big|
        \iint_{Q} e^{iu}\, dA
        \Big| \le \frac{4}{R}\,.
        $$
\end{lemma}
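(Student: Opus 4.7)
The plan is to realize $e^{iu}$ as a $\dd$-derivative up to a small error and then invoke the Stokes formula. The key observation is that, since $u$ is real and harmonic, the function $h:=u_z$ is holomorphic in $\widetilde Q$ and satisfies $|h|=|\db u|>R$ there; in particular $1/h$ is a well-defined holomorphic function on $\widetilde Q$ with $|1/h|<1/R$.

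I would first compute $\dd(e^{iu}/(ih))$. Using $\dd e^{iu}=ih\cdot e^{iu}$ together with $\dd(1/h)=-h'/h^2$ (clean precisely because $h$ is holomorphic, so no $\partial_z\bar h$ term appears), one obtains the identity
$$
\dd\Bigl(\frac{e^{iu}}{ih}\Bigr)=e^{iu}-\frac{h'}{ih^{2}}\,e^{iu}\,.
$$
Integrating this identity over $Q$ and applying the Stokes formula from the introduction with $\omega=(e^{iu}/(ih))\,d\bar z$, the term $\iint_Q\dd(e^{iu}/(ih))\,dA$ becomes a boundary integral, whose absolute value is at most $\tfrac12\int_{\dd Q}|1/h|\,|d\bar z|\le 2/R$, since $|e^{iu}|=1$ and the perimeter of $Q$ is $4$.

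It remains to bound $\iint_Q |h'|/|h|^2\,dA=\iint_Q|(1/h)'|\,dA$, and this is where the hypothesis on the larger square $\widetilde Q$ enters. For every $z\in Q$, the disc of radius $1/2$ centered at $z$ lies in $\widetilde Q$, so the Cauchy derivative estimate applied to the holomorphic function $1/h$ yields $|(1/h)'(z)|\le 2\sup_{\widetilde Q}|1/h|\le 2/R$. Since $Q$ has area $1$, this integral is also at most $2/R$, and adding the two contributions gives the claimed bound $4/R$.

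The only subtle point is finding the right primitive $g=e^{iu}/(ih)$ and recognizing that the harmonicity of $u$ plays a double role: it eliminates any cross term in the computation of $\dd g$, and it makes $1/h$ holomorphic, so that Cauchy's estimate on the slightly larger domain $\widetilde Q$ controls the error term. Once the identity is in place, the remainder is just a routine bookkeeping of constants.
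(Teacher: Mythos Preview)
Your proof is correct and is essentially the same as the paper's: both arguments write $e^{iu}$ as the $\dd$- (respectively $\db$-) derivative of $e^{iu}/(i u_z)$ (respectively $\overline{1/u_z}\,e^{iu}$) up to an error involving $(1/u_z)'$, convert the main term to a boundary integral via Stokes, and bound the error by Cauchy's estimate on the enlarged square. The only cosmetic difference is that the paper works with $f=1/\dd u$ and the operator $\db$ applied to $\bar f e^{iu}\,dz$, whereas you work with $h=u_z$ and $\dd$ applied to $(e^{iu}/(ih))\,d\bar z$; since $u$ is real these are complex conjugates of one another and the constants match exactly.
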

\begin{proof}
The function $f = 1/\dd u$ is analytic in the square
$\widetilde Q$
and $|f| < 1/R$. By Cauchy theorem $|f'|\le 2/R$ in $Q$\,.
Since $u$ is a real function, we have $\db u = 1/\bar f$, and
the  Stokes formula yields
        $$
        \int_{\dd Q} \bar f e^{iu} dz = 
        \iint_{Q} 
        \db (\bar fe^{iu})\, d\bar z\wedge dz =
        \iint_{Q} 
        (\bar f' e^{iu} + i e^{iu})\, d\bar z \wedge dz\,.
        $$
Thus
        $$
        \Bigl|
        \iint_{Q} e^{iu}\, dA\,
        \Bigr| 
        \frac 12 \left[
        \int_{\partial Q}|\bar f|\,|dz|+2\iint_Q|\bar f'|\,dA
        \right]
        \le 
        \frac4R\,.
        $$
We are done.
\end{proof}
A simple scaling argument shows that if the side length of $Q$ is not $1$ but
$\ell$, then the estimate changes to
        $$
        \Bigl|
        \iint_{Q} e^{iu}\, dA\,
        \Bigr| 
        \le 
        \frac{4}{R\ell}A(Q)\,.
        $$
We apply this lemma to estimate
        $$
        \iint_{Q}\, \theta^ k \, dA\,.
        $$
Take $w\in Q$ to satisfy
	$$
	\frac{\ell^2}{|\zeta-w|} =
	\frac{A(Q)}{|\zeta-w|}\,=
	\iint_{Q}\frac{1}{|\zeta-z|}\, dA\,.
	$$
By Lemma \ref{le:oscillation}
        $$
        \frac{1}{|\zeta-w|}
        \iint_{Q} \theta^k\, dA \le
        \frac{4}{R\ell|k|}\,
	      \iint_{Q}
	      \frac{1}{|\zeta-z|}\, dA\,,
        $$
where	
        $$
        R = 
        \inf_{z\in\widetilde Q} |\dd u|\,,
        $$
It remains to estimate $R$ from below.
We have
	$$
	|\dd u| = 
	\frac{1}{2}\,.
	\left|
	\frac{\phi'}{\phi}
	\right|
	$$
Take $z_0\in E_\delta\cap Q$. According to 
\myref{eq:e_delta_one} we have
        $$
        |\dd u(z_0)| \ge \delta n + 
        \sum_{\xi}\frac{2\delta}{|\xi - z_0|}\,.
        $$
Since $d(\widetilde Q,\zeta)\ge \delta/\sqrt{n}$,
$|z-z_0|\le 4\ell$, and $\delta\le \frac14$.
we get
	$$
	\frac{1}{2} \le
	\frac{|\xi -z_0|}{|\xi - z|} \le 2\,.
	$$
Note that 
$$
\left(\frac{\phi'}{\phi}\right)'=\sum_\eta \frac 1{(z-\eta)^2}-\sum_{\zeta}\frac{1}{(z-\zeta)^2}\,.
$$ 
Hence
	$$
	\left|\frac 12\left(\frac{\phi'}{\phi}\right)'\right|
	 \le
	\sum_\xi \frac{1}{2|z-\xi|^2} \le
        \frac{\sqrt{n}}{\delta}
        \sum_\xi \frac{1}{2|z-\xi|} 
	 \le
        \frac{\sqrt{n}}{\delta}
        \sum_\xi \frac{1}{|z_0-\xi|} 
        \,.
        $$
Finally, for $z\in\widetilde Q$ we have
        $$
        |\dd u(z)| \ge |\dd u(z_0)| - 2\ell\,
	\max_{\widetilde Q} \left|\frac 12\left(\frac{\phi'}{\phi}\right)'\right|
        \ge \delta n\,.
        $$
and thereby, in view of \myref{eq:theta_series}, 
	\be{eq:i_two}
	I_\zeta^{(2)}(Q) \le \frac{4a}{\delta^3\sqrt{n}}
	\iint_Q\frac{1}{|\zeta - z|}\, dA
        + \frac{1}{\pi}
	\iint_{Q}\frac{1}{|\zeta-z|}\, dA\,,
	\ee
where 
	$$
	a = \sum_{k\ne 0}\frac{|a_k|}{|k|}\,.	
	$$
Summing the inequalities \myref{eq:i_two} and
\myref{eq:i_one} over all squares $Q\in\widetilde F$
and then over $\zeta$ we get
	\be{eq:prefinal}
	J_\delta \le  
	\left(\frac{1}\pi + 2\delta+ \frac{4a}{\delta^3\sqrt n}\right)
        \iint_{F}
	\sum_\zeta\frac{1}{|\zeta-z|}\, dA\,.	
	\ee

To finish the estimate we need to estimate
the logarithmic capacity of the set $F$\,.
To this end, we estimate the logarithmic derivative $\phi$ of $p$ first. We have
	$$
	|\phi| \le 
	\sum_{\eta}
	\frac{1}{|\eta - z|}
	\le
	\frac{n\sqrt{n}}\delta
	$$
on $F$ (see \myref{eq:distance}).
Since $d(E,z)\le 2\ell$ for every point $z\in F$, we have
	$$
	\log|p|\le 2\ell\max_{F}|\phi| \le
	\frac{2\ell n\sqrt{n}}{\delta} = 2\delta n\,.
	$$
Thus,
	$$
	|p|\le e^{2\delta n}\,.
	$$
The last inequality means that the logarithmic capacity of
$F$ is at most $e^{2\delta}$ and, thereby, its area is at most 
$\pi e^{4\delta}$\,. Thus,
	$$
	|L| \le 
	26\pi\delta n + 2\pi\sqrt n +
	e^{2\delta}\left(\frac{1}\pi + 2\delta+ \frac{4a}{\delta^3\sqrt n}\right) 2\pi(n-1)
	$$
for every $\delta>0$\,. Now it is clear that the optimal choice of $\delta$ is about 
$n^{-1/8}$, which results in the estimate 
$$
|L|\le 2n+O(n^{7/8})\,.
$$
We have no doubt that the power $7/8$ can be substantially improved though to 
bring it below $1/2$ seems quite a challenging problem.

\end{document}